\newcommand{\Khat}{\hat{K}}
\newcommand{\Ktilde}{\tilde{K}}
\newcommand{\R}{\mathbb{R}}
\newcommand{\Acal}{\mathcal{A}}
\newcommand{\Fcal}{\mathcal{F}}
\newcommand{\Hcal}{\mathcal{H}}
\newcommand{\Kcal}{\mathcal{K}}
\newcommand{\Kcalhat}{\hat{\Kcal}}
\newcommand{\Ncal}{\mathcal{N}}
\newcommand{\Ycal}{\mathcal{Y}}
\newcommand{\identity}{\operatorname{id}}
\newcommand{\groupaction}[2]{{#1}\left[{#2}\right]}
\newcommand{\invgroupaction}[2]{{#1}^{-1}\left[{#2}\right]}
\newcommand{\norm}[1]{\left\lVert#1\right\rVert}
\newcommand{\abs}[1]{\left\vert#1\right\vert}
\newcommand{\dint}[1]{\,\mathrm{d}#1}
\newcommand{\parder}[2]{\frac{\partial #1}{\partial #2}}
\newcommand{\parderHigher}[3]{\frac{\partial^{#3} #1}{\partial #2^{#3}}}
\newtheorem{theorem}{Theorem}[section]
\newtheorem{definition}{Definition}
\newtheorem{remark}{Remark}
\newtheorem{example}{Example}[section]
\let\OLDthebibliography\thebibliography
\renewcommand\thebibliography[1]{
	\OLDthebibliography{#1}
	\setlength{\parskip}{0pt}
	\setlength{\itemsep}{0pt plus 0.3ex}
}
\date{}
\begin{document}

\title{Equivariance and partial observations in Koopman operator theory for partial differential equations}
\author[1,2]{\normalsize Sebastian Peitz}
\author[3]{Hans Harder}
\author[4]{Feliks Nüske}
\author[5]{Friedrich Philipp}
\author[6]{Manuel Schaller}
\author[5]{Karl Worthmann}
\affil[1]{\normalsize Department of Computer Science, TU Dortmund, Dortmund, Germany}
\affil[2]{\normalsize Lamarr Institute for Machine Learning and Artificial Intelligence, Dortmund, Germany}
\affil[3]{\normalsize Department of Computer Science, Paderborn University, Paderborn, Germany}
\affil[4]{\normalsize Max Planck Institute for Dynamics of Complex Technical Systems, Magdeburg, Germany}
\affil[5]{\normalsize Institute of Mathematics, TU Ilmenau, Ilmenau, Germany}
\affil[6]{\normalsize Institute of Mathematics, TU Chemnitz, Chemnitz, Germany}

\maketitle

\begin{abstract}
The Koopman operator has become an essential tool for data-driven analysis, prediction and control of complex systems. The main reason is the enormous potential of identifying linear function space representations of nonlinear dynamics from measurements. 
This equally applies to ordinary, stochastic, and partial differential equations (PDEs). Until now, with a few exceptions only, the PDE case is mostly treated rather superficially, and the specific structure of the underlying dynamics is largely ignored.
In this paper, we show that symmetries in the system dynamics can be carried over to the Koopman operator, which allows us to significantly increase the model efficacy. Moreover, the situation where we only have access to partial observations---i.e., measurements, as is very common for experimental data---has not been treated to its full extent, either. Moreover, we address the highly-relevant case where we cannot measure the full state, where alternative approaches (e.g., delay coordinates) have to be considered. We derive rigorous statements on the required number of observables in this situation, based on embedding theory. We present numerical evidence using various numerical examples including the wave equation and the Kuramoto-Sivashinsky equation.
\end{abstract}

\section{Introduction}
Many phenomena in nature can be described by \emph{partial differential equations (PDEs)}, where the system state depends on both space and time. Popular examples are fluid dynamics or electromagnetics. Studying such systems poses many challenges, including (the derivation of) sophisticated numerical discretization schemes (using, e.g., finite elements), a very high dimension of the resulting discretized nonlinear systems, and the challenge that in real experiments, the entire system state is accessible in few cases only. In addition, we only have very crude models---if at all---for some systems, e.g., from biology. 

Due to these reasons, there has been an increasing interest in the scientific community to develop and improve methods to infer and predict dynamical systems from data. Popular examples are the \emph{Sparse Identification of Nonlinear Dynamics} \cite{BPK16}, statistical approaches such as the \emph{Mori-Zwanzig} framework \cite{CHK00}, or techniques based on deep learning \cite{VBW+18}, also using physical knowledge \cite{RPK19}. Another approach that has been particularly successful in the past decade is the \emph{Koopman operator} framework \cite{Koo31,RMB+09,Mez13}. The driving force behind this renaissance is twofold: (1) The Koopman operator yields a linear representation of nonlinear dynamical systems, thus giving us access to powerful techniques from linear systems theory, and (2) the advances in numerical approximation (here in the form of the \emph{Extended Dynamic Mode Decomposition (EDMD)} \cite{WKR15,KKS16,KNP+20}) now allow us to identify the Koopman operator from measurements using simple linear regression.
As a consequence, the Koopman operator has been studied extensively for data-driven analysis \cite{BBKK22}, coarse graining \cite{BNC18,KNP+20,NKS21,NKBC21} and control \cite{PBK15,KM18,PK19,POR20,SWP+23,BGSW23} of a very large number of applications including neuroscience, social systems, robotics, and fluid dynamics. Recent results on finite-data error bounds can be found in \cite{ZZ22,NPP+23,PSW+24,BML+23,KPS+24}. 

However, PDEs have until now mostly been treated in a rather ad-hoc manner, meaning that the observable consists of the state values at the grid nodes of some numerical discretization scheme \cite{RMB+09,Sch10}. Alternatively, kernel-based observable functions have been used---again with discretized full-state observables---to avoid dealing with the infinite-dimensional nature of PDEs \cite{WRK15,KPB16}. A machine-learning-based approach was presented in \cite{OR19}, where an autoencoder was trained to map the full-state of the PDE---using the values on the grid nodes---to a latent space of low dimension. In \cite{OPR24}, a method based on a Kalman filter was developed to avoid defining the observable function altogether.
When it comes to a more formal treatment of PDEs, the literature is much scarcer. In \cite{PK18}, for instance, the Koopman operator of the Burgers equation was derived analytically, albeit with the help of first linearizing the dynamics using the Cole-Hopf transformation.
The most formal treatment of Koopman operator methods for PDEs can be found in \cite{NM20,Mau21}.

Despite its recent increase in popularity, there is until now only little literature on Koopman operators for systems with symmetries; see, e.g., \cite{SER+19,sinha2020koopman} for ordinary differential equations, or \cite{WSGK22}, where the matrix approximation of the Koopman operator was used to heuristically identify symmetries in Markov Decision Processes. The central goal of this paper is thus to provide a systematic treatment of symmetries in PDEs (Section \ref{sec:Equivariance}), which will allow us to significantly increase the numerical performance of the Koopman operator approximation. 

A second aspect that has until now received a mostly heuristic treatment, is the question of partial observations and unknown state spaces. Even though the operator can be approximated for many different types of observation functions, many articles simply use the \emph{full state observable} $f(y)=y$, which greatly simplifies the situation. 
There are---of course---several exceptions. For instance, an attempt in this direction was proposed in \cite{CKH22} using nonlinear input transformations, or in \cite{OPR24}, where a Kalman filter was used to infer the state of an unknown system from measurements. Finally, delay coordinates have been successfully used in the so-called \emph{Hankel DMD} \cite{AM17}, also in various applications \cite{BBP+17,AM20,KKBK20}.
However, there are severe pitfalls when we do not know or cannot efficiently discretize the system's state space. 
Most importantly, it is not clear how many measurements are required to obtain a reliable approximation of the Koopman operator associated with the system dynamics.
Here, we will address the issue of unknown state spaces and/or partial measurements in detail and derive rigorous conditions on partial observations (Section \ref{sec:Partial}) that rely on fundamental embedding theorems. 
It should be noted that a similar approach was considered in \cite{BKK+17} for stochastic dynamical systems in the context of molecular dynamics.

Finally, we show in Section \ref{sec:EqivPlusPartial} that a combination of these results allows us to design efficient schemes for EDMD, and how to transfer a learned Koopman approximation from one domain to another without retraining.

\section{Koopman operator for PDEs}
We start by introducing the Koopman operator for partial differential equations, although everything that follows applies equally to ordinary differential equations (ODEs). Consider the general dynamical system
\begin{equation}\label{eq:PDE}
    \parder{y}{t} = \Ncal(y), \quad y(\cdot,0) = y_0.
\end{equation}
Here, $y(x,t)$ is the space- and time-dependent system state, $t \in \R_{\geq 0}$ is the time and $x\in\Omega$ the spatial coordinate, where $\Omega \subset \R^n$ is the spatial domain with boundary $\partial \Omega$. For simplicity of notation, we will restrict our analysis to one-dimensional spatial domains. However, extensions to higher dimensions are possible in a straight-forward manner. Moreover, the state at time $t$ is an element of a function space with appropriate differentiability (e.g., a Sobolev space $y(\cdot, t)\in H^p(\Omega)$) and $\Ncal: D(\Ncal) \rightarrow H^p(\Omega)$ (with $D(\Ncal)\subset H^p(\Omega)$ being the domain of $\Ncal$) is a nonlinear partial differential operator describing the dynamics of the system. We will for now assume periodic boundary conditions (BCs). The system's flow map $\Phi^\tau : L^2(\Omega)\to L^2(\Omega)$ is defined by
\[
    y(\cdot,t+\tau) = \Phi^\tau(y(\cdot,t)).
\]
We assume that the solution to \eqref{eq:PDE} exists and that it is unique. Furthermore, we assume that the system possesses an invariant compact set $\Acal\subset D(\mathcal N)$, i.e., $\Phi^\tau(\Acal) = \Acal$, which has dimension~$\dim(\Acal)=d$ (often, the \emph{box counting dimension} is used in this context \cite{ZDG19}). 
Usually, $\Acal$ is the system's \emph{attractor} or an \emph{invariant manifold}, and it is well known that many PDEs possess an attractor with $d<\infty$.
\begin{example}\label{ex:KS}
    The well-known \emph{Kuramoto--Sivashinsky} equation that we will study in this paper is, in dimensionless form, given by
\begin{equation}\label{eq:KS}
    \parder{y}{t} + 4\parderHigher{y}{x}{4} + \mu \left[ \parderHigher{y}{x}{2} + y \parder{y}{x} \right] = 0
\end{equation}
    on the domain $\Omega=(0,L)=(0,2\pi)$ with $\mu \in (0,\infty)$. Depending on $\mu$, the system exhibits rich dynamics, from bimodal fixed points to traveling waves to fully chaotic behavior \cite{HNZ86}. Moreover, one can bound the dimension of the attractor $\Acal$ in terms of the domain size $L$ via $d\leq L^{2.46}$ when not considering the non-dimensionalized version as we do in \eqref{eq:KS}, see \cite{ZDG19} for more details.
\end{example}

The \emph{semigroup of Koopman operators} associated with \eqref{eq:PDE} is defined on a space of observable functionals, see~\cite{Mau21}.

\begin{definition}[Koopman semigroup and generator]\label{def:Koopman}
Consider the space $C(\Acal)$ of continuous real-valued functionals $f: \Acal \to \R$, endowed with the supremum norm $\norm{f} = \sup_{y\in\Acal}\abs{f(y)}$.
The \emph{semigroup of Koopman  operators} $(\Kcal^\tau)_{\tau\geq 0}$ associated with the semiflow $(\Phi^\tau)_{\tau\geq 0}$ is defined by
\begin{equation}\label{eq:Koopman}
\Kcal^\tau f = f \circ \Phi^\tau, \qquad f\in C(\Acal).
\end{equation}
The Lie generator of the semigroup is the linear operator $\Kcal: D(\Kcal) \rightarrow C(\Acal)$ that satisfies
\begin{align}\label{eq:Koopman_generator}
    \left(\Kcal f\right)(y) = \lim_{\tau\to 0} \frac{\left(\Kcal^\tau f\right)(y) - f(y)}{\tau}\qquad \forall y\in\Acal.
\end{align}
\end{definition}

\begin{remark}
    The Lie generator of the Koopman semigroup has a close connection to generators of strongly continuous semigroups, with the distinction that the limit is defined in the strong sense in the latter case, see \cite[Remark 1]{Mau21} for a more detailed discussion and additional references.
\end{remark}

\begin{remark}\label{rem:vector}
An extension to vector-valued observable functions $f: \Acal \rightarrow \R^q$ in $C(\Acal)^q$ can be realized in a straightforward manner, see, e.g., \cite{BMM12}.
\end{remark}

In the case of a Koopman semigroup associated with a semiflow generated by the PDE \eqref{eq:PDE}, it follows from the chain rule that the generator is given by
\begin{equation}\label{eq:Koopman_DE}
    (\Kcal f)(y) = G_{\Ncal(y)}f(y),
\end{equation}
which can be interpreted as the Lie derivative associated with the infinite-dimensional vector field $\Ncal$, where $G_{\Ncal(y)}f(y)$ denotes the (linear) Gâteaux derivative of $f$ at $y$ in the direction $\Ncal(y)$. 
Note that this is closely related to the generator PDE that we find for ODEs \cite{KNP+20}. An alternative derivation using the functional derivative can be found in \cite{NM20}.

Even though the Koopman operator formalism has been known for a very long time, it has received a massive increase in attention in the past decade, mostly due to the advances in its numerical approximation via EDMD. Based on the observation that the operators in Eqs.\ \eqref{eq:Koopman} and \eqref{eq:Koopman_generator} are linear, we can try to compute finite-dimensional approximations $K^\tau$ and $K$ of $\Kcal^\tau$ and $\Kcal$, respectively. We achieve this via Galerkin projection by introducing a finite dictionary $\{\psi_j\}_{j=1}^{\ell}$ of functions $\psi_j\in C(\Acal)$,
and coefficients $a\in\R^\ell$: 
\begin{equation}\label{eq:Galerkin}
	f(y(\cdot,t)) \approx \sum_{j=1}^\ell a_j \psi_j(y(\cdot,t)) = a^\top \Psi(y(\cdot,t)).
\end{equation}
The goal is then to approximate the compression $P_{\mathbb{V}} \mathcal{K}^\tau|_{\mathbb{V}}$, where $\mathbb{V} = \operatorname{span}\{ \psi_j| j \in \{1,\ldots,\ell\} \}$, and $P_{\mathbb{V}}$ is the projection onto $\mathbb{V}$.
Using time series data $\{\Psi(y_k)\}_{k=0}^m$,  
where $y_i=y(\cdot,i\tau)$ and $\Psi = [\psi_1^\top,\cdots,\psi_\ell^\top]^\top$, we can now simply use linear regression to find the best fit matrix $K^\tau$:
\begin{equation}\label{eq:Koopman_regression}
\min_{K^\tau\in\R^{\ell \times \ell}} \sum_{i=0}^{m-1} \norm{\Psi(y_{i+1}) - K^\tau \Psi(y_i)}_2^2.
\end{equation}
A very similar regression problem can be formulated to approximate the generator $\Kcal$ via $K$ \cite{KNP+20}.
In both cases, it can be shown that this matrix converges to the Galerkin projection of $\Kcal^\tau$ in the infinite data limit $m\rightarrow\infty$ \cite{WKR15,KKS16}, and to the true Koopman operator when additionally $\ell\rightarrow \infty$ \cite{KM18b}. 
Moreover, finite-data error bounds can be found in \cite{Mez22,ZZ22,NPP+23,PSW+24,KPS+24}, using either i.i.d.\ or ergodic sampling. 
Most of them have until now considered ordinary or stochastic differential equations. A more general setting can be found in \cite{PSB+24}, which covers the PDE case as well.

\section{Equivariant Koopman operators for equivariant PDEs}
\label{sec:Equivariance}
It is a well-known fact that a large number of PDE systems exhibits symmetries, i.e., the dynamics are equivariant under certain group actions (to be specified shortly), which depend on the system dynamics $\Ncal$, as well as on the domain $\Omega$ and the boundary conditions on $\partial \Omega$. For instance, an alternative version of Eq.\ \eqref{eq:KS} from Example \ref{ex:KS} in coordinate-free form is
\begin{equation}\label{eq:KS_alternative}
    \parder{y}{t} = \underbrace{- 4\Delta^2 y - \mu \left[ \Delta y + \frac{1}{2} \left| \nabla y \right|^2 \right]}_{=\Ncal(y)},
\end{equation}
from which one can derive \eqref{eq:KS} by taking the derivative with respect to $x$ and substituting $\partial y / \partial x$ by $y$.
In Eq.\ \eqref{eq:KS_alternative}, we quickly see that the partial differential operator $\Ncal$ is equivariant under translations (for any spatial dimension) as well as rotations (in 2D or 3D). This means that shifting (or rotating) the solution $y$ and then applying $\Ncal$ yields the same result as shifting (or rotating) $\Ncal(y)$. The goal of this section is to show that the Koopman operator inherits such symmetries.

\subsection{Some prerequisites on equivariant systems}
In order to introduce the basic symmetry concepts, let us disregard the time dependence of $y$ for a moment and instead denote the state at a fixed time $t$ by $y_t(x)=y(x,t)$ for $x\in\Omega$ 
For a more detailed introduction, see \cite{BBCV21}. A \emph{group} is a set $G$ along with an associative composition operation $\circ : G \times G \rightarrow G, (g, h) \mapsto g h$ that contains an identity and inverses. 
A \emph{group action} of $G$ on a set $\Omega$ is then defined as a mapping $(g,x) \rightarrow \groupaction{g}{x}$ associating a group element $g\in G$ and a point $x\in\Omega$ with some other point in $\Omega$ in a way that is compatible with the group operations, i.e., $\groupaction{g}{(\groupaction{h}{x})}=\groupaction{(gh)}{x}$ for all $g, h \in G$ and $x \in \Omega$.
\begin{example}
    The Euclidean group $E(2)$ in the plane is the group of transformations of $\R^2$ that preserves Euclidean distances, i.e., it consists of translations, rotations, and reflections. The same group can also act on the space of functions on the plane, that is, if we have a group $G$ acting on $\Omega$, we automatically obtain an action of $G$ on the space $\Ycal(\Omega)$: $\groupaction{g}{y_t}(x) = y_t(\invgroupaction{g}{x})$.
    Due to the inverse on $g$, this is indeed a valid group action, in that we have 
    $(\groupaction{g}{(\groupaction{h}{y_t})})(x)=(\groupaction{(gh)}{y_t})(x)$.
\end{example}
If $G$ acts on two sets $X$ and $Y$ and $f : X \rightarrow Y$ is some function, we say that $f$ is \emph{equivariant} if it satisfies $\groupaction{g}{f(x)} = f(\groupaction{g}{x})$, and we say that it is \emph{invariant} if it satisfies $f(x) = f(\groupaction{g}{x})$. For example, the flow $\Phi^\tau : \Ycal \rightarrow \Ycal$ is often times equivariant with respect to a given group action on $\Ycal$.

\subsection{Related symmetry concepts in machine learning}
Before addressing the particular case of Koopman operators in the following subsection, we would like to highlight that the concept of exploiting symmetries has recently gained a lot interest in a large number of sub-fields of machine learning research.
Starting with convolutional neural networks---which respect translational symmetries by design---many of these approaches can today be subsumed under the umbrella term \emph{geometric deep learning} \cite{BBCV21}. The unifying objective is to construct learning algorithms that respect known symmetries by design, which often leads to a significant increase in performance and a reduction in the required training data; see \cite{MBB+15,CW16,BBL+17} for a few early references and \cite{AGS21,BBCV21} for comprehensive overviews. Further examples include reinforcement learning \cite{PWH+20,VRV+23,PSC+24} (also in combination with Koopman operators \cite{WSGK22}), reservoir computing \cite{PHG+18} or extreme learning machines \cite{HRV+24}.

\subsection{Equivariant Koopman operators}
In order to formalize our discussion, we will consider observables $f$ that can be defined via convolutions. As a special case, this yields point measurements, but it covers many other settings as well.
We thus define a corresponding \emph{convolution operator}.
Generally, a convolution is the composition of two functions which produces another function in a new coordinate. Usually, a function of interest (e.g., our system state $y$) is composed with a kernel $\theta$:
\begin{equation*}
    (y_t \star \theta)(s)= \int_{\Omega} y_t(x) \theta(s-x) \dint{x}.
\end{equation*}
In many situations, $\theta$ is a Gaussian kernel---i.e., $\theta=\exp(-\|s-x\|^2 / \alpha)$, with kernel bandwidth $\alpha\in\R^{>0}$---that somewhat ``localizes'' $y_t$ around the center $s$ (even though not in a strict sense, of course).
Now, fixing $s$, we can define the observable function
\begin{equation}\label{eq:observable}
    f_s(y_t) = (y_t \star \theta)(s).
\end{equation}

\begin{remark}\label{rem:conv}
    Eq.\ \eqref{eq:observable} is very general, as it includes observables modeling point evaluations (using a Dirac-Delta kernel) as well as integrals over parts of the domain. If we instead want to consider nonlinear functions of the state, we can still remain in the convolution setting by first applying a nonlinear function---e.g., in a point-wise manner---and then the convolution operation.
\end{remark}

Let us now assume that our group action is a shift operation, $\groupaction{g}{x} = x-g~ \mathsf{mod}~L$, where $G$ is the group $[0,L)$ equipped with modular addition.
Furthermore, we assume that the partial differential operator $\Ncal$ defined by Eq.\ \eqref{eq:PDE} does not explicitly depend on space $x$ or time $t$.
We thus obtain shift equivariance of the right-hand side of the PDE and thus, the group action commutes with the flow $\Phi^t$ \cite[Remark 1]{OP21}:
\begin{equation*}
    \groupaction{g}{\Phi^\tau(y_t)} = \Phi^\tau(\groupaction{g}{y_t}),
\end{equation*}
where $\groupaction{g}{y_t}(x) = y_t(\invgroupaction{g}{x})$. 
Using the convolution observable and the equivariance of the PDE, we find that the corresponding Koopman operator inherits the equivariance property.
\begin{theorem}
    Consider a PDE of the general form \eqref{eq:PDE} with periodic boundary conditions, where $\Ncal$ does not explicitly depend on space $x$ or time $t$ and is thus equivariant under translations in $x$ (and $t$) in view of the periodic boundary conditions. Further assume that the observable $f_s$ (of the form as defined in \eqref{eq:observable}) is periodic as well.
    Then, the Koopman operator associated with \eqref{eq:PDE} is also equivariant under the same group action.
\end{theorem}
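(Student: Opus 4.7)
The plan is to reduce the claim to a direct unfolding of definitions once the group action on observable functionals is fixed. First, I would extend the state-space action from $\Acal$ to the space $C(\Acal)$ by the pull-back rule $(\groupaction{g}{f})(y) := f(\invgroupaction{g}{y})$ for $f \in C(\Acal)$, mirroring the convention $\groupaction{g}{y_t}(x) = y_t(\invgroupaction{g}{x})$ already adopted in the paper. Before this is legal, one must check that the action preserves $\Acal$, i.e., $\groupaction{g}{\Acal} = \Acal$. This is immediate from the commutation $\groupaction{g}{\Phi^\tau(y)} = \Phi^\tau(\groupaction{g}{y})$ combined with $\Phi^\tau(\Acal) = \Acal$, which forces $\groupaction{g}{\Acal}$ to be a compact invariant set of the semiflow and hence to coincide with $\Acal$ (absorbed into the standing attractor hypothesis).

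With the group action in place, the equivariance of the Koopman operator reduces to a one-line computation: for every $y \in \Acal$ and every $f \in C(\Acal)$,
\begin{equation*}
    (\Kcal^\tau(\groupaction{g}{f}))(y) = (\groupaction{g}{f})(\Phi^\tau(y)) = f(\invgroupaction{g}{\Phi^\tau(y)}) = f(\Phi^\tau(\invgroupaction{g}{y})) = (\Kcal^\tau f)(\invgroupaction{g}{y}) = (\groupaction{g}{(\Kcal^\tau f)})(y),
\end{equation*}
where the first, second, fourth, and fifth equalities are merely the definitions of $\Kcal^\tau$ and of the pull-back, and the third equality is exactly the flow equivariance recalled just above the theorem statement.

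To connect this abstract identity with the dictionary actually used in practice, I would specialize to the convolution observable $f_s$ from \eqref{eq:observable} and perform the change of variables $x' = \invgroupaction{g}{x}$ in the defining integral. Using the $L$-periodicity of $\theta$ and of $y$—which is precisely where the ``$f_s$ is periodic'' hypothesis enters, ensuring the modular substitution is measure-preserving on $\Omega$—one obtains the pleasant identity $\groupaction{g}{f_s} = f_{\groupaction{g}{s}}$. Thus the pull-back action simply shifts the evaluation point, the space spanned by the convolution observables is stable under the group, and the equivariance statement is compatible with the finite-dimensional EDMD setting used later.

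The only mild obstacle is verifying the stability $\groupaction{g}{\Acal} = \Acal$ rigorously without over-restricting the class of admissible invariant sets; once that is granted, the main computation is a short string of definition-level equalities, and the role of the convolution/periodicity assumption is purely to guarantee that the action closes on the observable dictionary.
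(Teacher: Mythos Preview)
Your argument is correct, and the ingredients are the same as the paper's (the translation-equivariance of convolution via a change of variables, plus the commutation $\groupaction{g}{\Phi^\tau(y)}=\Phi^\tau(\groupaction{g}{y})$), but you organize them differently. The paper first proves the convolution identity $f_{\invgroupaction{g}{s}}(y_t)=\groupaction{g}{f_{(\cdot)}(y_t)}(s)$ and then substitutes $y_t=\Phi^\tau(y_0)$ to obtain the Koopman statement for the specific family $\{f_s\}$; in fact the displayed Koopman chain in the paper does not explicitly call on the flow equivariance at all---that hypothesis only becomes operative in the practical conclusion drawn afterwards, namely that the same Koopman approximation can be reused at any shift $s$. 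You instead prove the operator identity $\Kcal^\tau(\groupaction{g}{f})=\groupaction{g}{(\Kcal^\tau f)}$ on all of $C(\Acal)$ directly from flow equivariance, and only then specialize to the convolution dictionary via $\groupaction{g}{f_s}=f_{\groupaction{g}{s}}$. This buys you a cleaner separation of concerns: the equivariance of $\Kcal^\tau$ is a general fact about the pull-back action and needs nothing about convolutions, while the periodicity/convolution hypothesis is used solely to show that the dictionary $\{f_s\}$ is closed under the group. Your version also makes explicit where the flow equivariance enters, and it flags the $G$-invariance of $\Acal$ as a prerequisite---a point the paper leaves implicit. Your justification of $\groupaction{g}{\Acal}=\Acal$ (``compact invariant hence equal to $\Acal$'') is admittedly heuristic, since not every compact invariant set is the attractor; but for a global attractor one gets this from maximality, and in any case the paper does not address it.
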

\begin{proof}
Introducing $\tilde{x} = x-g$, we get (under periodic BCs)
\begin{equation*}
\begin{aligned}
    f_{\invgroupaction{g}{s}}(y_t) &=  \int_{\Omega} y_t(x) \theta(s+g-x) \dint{x} \\
    &= \int_{\Omega} y_t(\tilde{x}+g) \theta(s-\tilde{x}) \dint{\tilde x}  \\
    &= \int_{\Omega} y_t(\invgroupaction{g}{\tilde{x}}) \theta(s-\tilde{x}) \dint{\tilde x} \\
    &= (\groupaction{g}{y_t} \star \theta)(s)= \groupaction{g}{(y_t \star \theta)}(s) \\
    &=\groupaction{g}{f_{(\cdot)}(y_t)}(s),
\end{aligned}
\end{equation*}
where we have exploited the linearity of both the convolution operation and the group action in line four in order to exchange the operations.
Moreover, we assume that $\theta$ is periodic as well.
As a consequence, we also obtain equivariance of the action of the Koopman operator. For any $y_0\in\Acal$ with $y_1=\Phi^\tau\in\Acal$, we find
\begin{equation*}
    \begin{aligned}
    \left(\Kcal^t f_{\invgroupaction{g}{s}}\right)(y_0) &= f_{\invgroupaction{g}{s}} \left(\Phi^t(y_0)\right) \\
    &= \groupaction{g}{f_{(\cdot)}\left(\Phi^t(y_0)\right)}(s) \\
    &= \groupaction{g}{\left(\Kcal^t f_{(\cdot)}\right)(y_0)}(s).
    \end{aligned}
\end{equation*}
\end{proof}
What follows is that the same Koopman operator approximation can be applied to different observables $f_{s_1}$ and $f_{s_2}$, where $s_2-s_1 = g \in \R$. We can thus compute a Koopman operator for some $f_{s}$, and apply the same operator of a shifted version of $f_{s}$.
This way, we obtain the possibility to decouple the Koopman operator from a specific spatial domain $\Omega$. As long as we remain within domains with periodic boundary conditions, a transfer is possible in a simple and straightforward manner. Moreover, the equivariance can easily be extended to vector-valued convolution observables, cf.\ Remark \ref{rem:vector}.

Note that the specific form of the observable \eqref{eq:observable} excludes global transformations such as the Fourier transform. For these, the symmetry transformation has more sophisticated implications that are less intuitive, which we emphasize in the following example. For a more thorough discussion of the requirements for symmetry-preserving observables, see \cite{HPN+24}.
\begin{example}[Fourier observable]\label{ex:FourierObservable}
    Let us assume that---instead of Eq.\ \eqref{eq:observable}---our observable $f$ is defined as the (complex) Fourier transform for some fixed frequency $\omega$, i.e.,
    \[
        f(y) = \Fcal\{y\}(\omega) = \int_{\R^n} y(x) e^{-i\omega x} \dint{x},
    \]
    and that our group action $g$ introduces a translation: $\groupaction{g}{y}(x)=y(x-g)$.
    We then find
    \begin{align*}
        f(\groupaction{g}{y}) &= \Fcal\{\groupaction{g}{y}\}(\omega) = \int_{\R^n} \groupaction{g}{y}(x) e^{-i\omega x} \dint{x} = \int_{\R^n} y(x-g) e^{-i\omega x} \dint{x} \\
        &= \int_{\R^n} y(x) e^{-i\omega (x+g)} \dint{x}
        = e^{-i\omega g} \int_{\R^n} y(x) e^{-i\omega x} \dint{x} \\
        &= e^{-i\omega g} f(y) = \groupaction{g}{f}(y).
    \end{align*}
    We thus see that the group action applied to $f$ results in the multiplication with a complex number.
\end{example}

\section{Partial measurements \& unknown state spaces}
\label{sec:Partial}
A large part of the existing literature focuses on small-scale systems or the situation where $f$ is the identity mapping, i.e., the \emph{full state observable} \cite{WKR15}. This means that the Koopman operator provides a linear system which advances the specific function (or functional) $f=\operatorname{Id}$ forward in time. Alternatively, it is at least assumed that the state space (here: $L^2(\Omega)$) is known and that the observable $f$ can be numerically approximated in an efficient manner---exceptions being approaches based on Hankel DMD or Kalman filtering, as discussed in the introduction.
However, this viewpoint has severe limitations. If, for instance, we consider PDEs, the state space may be challenging to approximate numerically. Moreover, if the data stems from real experiments, the domain may be unknown altogether. 
Finally---and this is central to efficient numerical approximations of equivariant Koopman operators---if we want to calculate local, equivariant EDMD models, then wee need to ensure that these local measurements yield an approximation that is consistent with the true dynamics.

\subsection{A common pitfall of partial measurements}
In the following, we will precisely consider the above-described situation where we do not necessarily know $f$ or the state space, not to mention the attractor $\Acal$. On an abstract level, there is an observable $f:\Acal \rightarrow \R^q$, according to which we collect our measurements $z_i = f(y_i)$. However, as we are ignorant of $f$ or the domain $\Acal$, we appear to be at an impasse: we cannot define a dictionary $\{\psi_j\}_{j=1}^{\ell}$ in $C(\Acal)^q$, which is the key step in EDMD, cf.\ Eqs.\ \eqref{eq:Galerkin}--\eqref{eq:Koopman_regression}.

A practical means to overcome this impasse is to collect measurements $z=[z_0,z_1,\ldots,z_m]\in\R^{q \times (m+1)}$, where $z_i = f(y_i)$, and try to approximate the Koopman operator directly from the data. 
If the measurement is low-dimensional (i.e., $q$ is small) one often simply \emph{lifts} the data $z$ using a dictionary such as delay coordinates or polynomials with maximal degree $s$, i.e.,
\begin{equation}\label{eq:EDMD_CDS}
    \Psi(z) = \begin{bmatrix} 1 & z_1 & \ldots & z_q & z_1^2 & z_1 z_2 & \ldots & z_q^{s}\end{bmatrix}^\top_{~\cdot}
\end{equation}
Examples of this approach are, e.g., lift and drag measurements of a fluid flow \cite{PK19}, coarse-grained coordinates of large molecules \cite{NKBC21} or delay coordinates of highway traffic data \cite{AM20}.
%
However, this approach provides a major pitfall when it comes to learning anything about the dynamics of the original system. 
Conceptually, we treat our measurements in such a way that we now use the full state observable on a different, implicitly defined dynamical system $\varphi^\tau: \R^q \rightarrow \R^q$ for the dynamics of the observed quantity $z$ on the artificial \emph{state space} $\R^q$:
\begin{equation}\label{eq:CDS}
    z_{i+1} = \varphi^\tau(z_i).
\end{equation}
Following \cite{DHZ16,ZDG19}, we will call \eqref{eq:CDS} the \emph{Core Dynamical System (CDS)}.

Assuming that the CDS exists and is uniquely defined, we can now define a new observable $h\in \Hcal= C(f(\Acal))^q$, $h: \R^q \rightarrow \R^q$ 
whose domain is now the state space of the CDS. 
In this setting, we can simply apply EDMD in its standard form (Eqs.\ \eqref{eq:Galerkin}--\eqref{eq:Koopman_regression} in combination with a dictionary as in \eqref{eq:EDMD_CDS}), to identify the Koopman operator associated with the CDS and the observable function $h$.\footnote{For convenience, we will use the full state observable $h=\identity$ here, but our equivalence result in Theorem \ref{thm:Koopman_CDS} also covers the more general setting for arbitrary $h$.} This concept is illustrated in Fig.\ \ref{fig:Koopman_CDS}, where $z=h(z) = h(f(y))$, and $\{\psi_j\}_{j=1}^{\ell}$ spans a subspace of $\Hcal$ instead of $C(\Acal)$.


\begin{figure}[h]
	\centering
    \includegraphics[width=.7\textwidth]{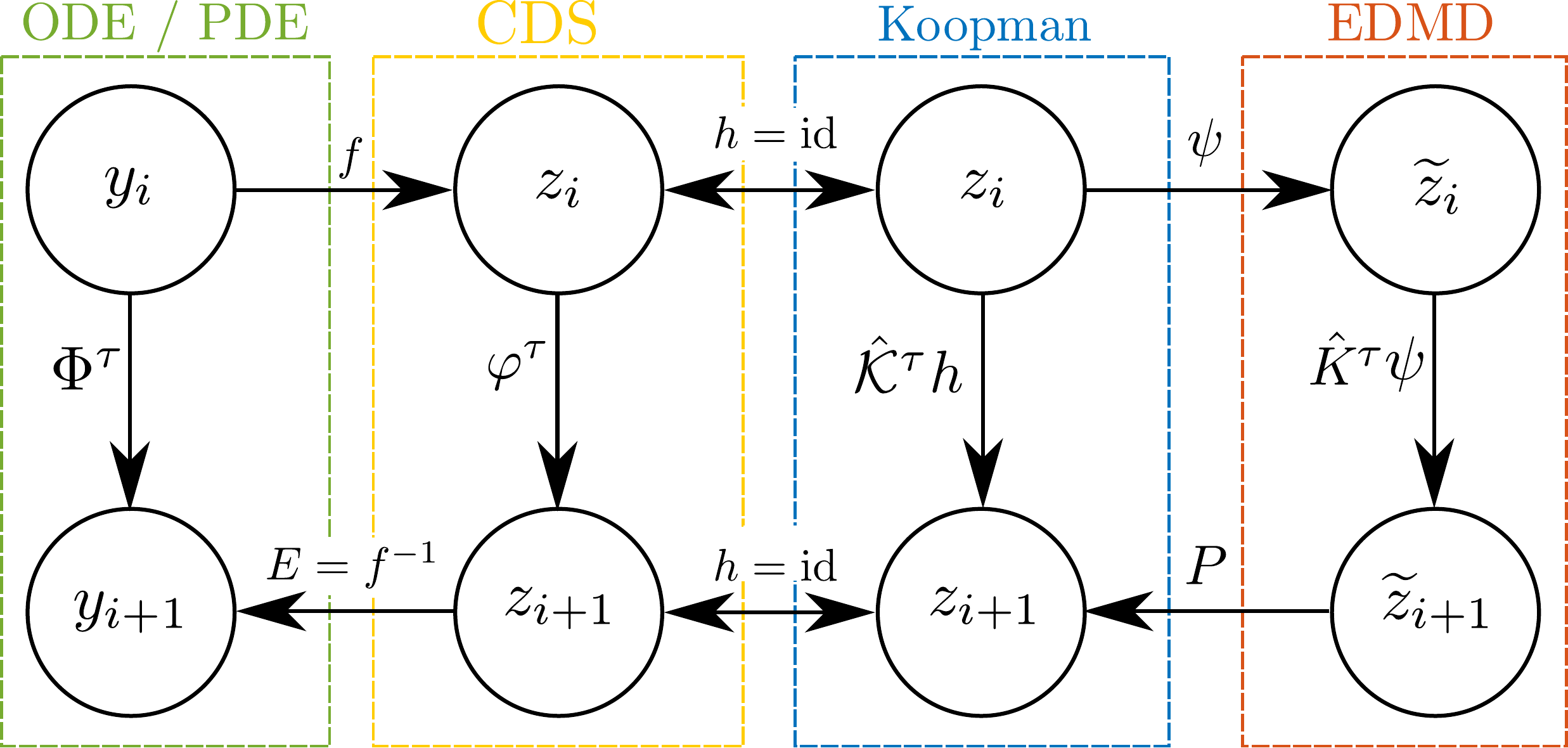}
	\caption{The extended Koopman operator concept for partially observed or unknown states. Instead of directly learning the Koopman operator for the observable $f:\Ycal \rightarrow \R^q$, we introduce the core dynamical system $\varphi^\tau$ as an intermediate model that---given a sufficiently large embedding dimension $q$---has a one-to-one correspondence to $\Phi^\tau$ on the attractor. The Koopman operator is then defined in the standard ODE setting using a new observable function $h\in\Hcal$, $h:\R^q \rightarrow \R^q$. For simplicity, we choose $h=\identity$ here.}
	\label{fig:Koopman_CDS}
\end{figure}

\subsection{Relation between the Core Dynamical System and the underlying PDE}
What remains to be shown is the correspondence between the original dynamical system $\Phi^\tau$ and the corresponding CDS $\varphi^\tau$. To this end, we closely follow the approach in \cite{ZDG19} and make use of well-known embedding theorems such as \cite{Whi36,Tak81,SYC91,Rob05}. 
For a detailed discussion on the more intricate implications of the following theorem---as well as further references regarding the terms \emph{prevalence}, \emph{box counting dimension} and \emph{thickness exponent} (which is $\sigma=0$ for many PDEs)---we refer the reader to \cite{ZDG19}.

\begin{definition}
    [\cite{ZDG19}, Definition 2.1]
    ~
    \begin{enumerate}[label=(\alph*)]
        \item A Borel subset $S$ of a normed linear space $V$ is \emph{prevalent} if there is a finite dimensional subspace $E$ of $V$ (the ``probe space'') such that for each $v\in V$, $v+e$ belongs to $S$ for (Lebesgue) almost every $e\in E$. Following a remark made in \cite{SYC91}, we will say that almost every map in a function space $V$ satisfies a certain property if the set of such maps is prevalent, even in the infinite dimensional case. Then this property will be called \emph{generic} (in the sense of prevalence).
        \item Let $Y$ be a Banach space, and let $\Acal \subset Y$ be compact. For $\varepsilon > 0$, denote by $N_Y(\Acal, \varepsilon)$ the minimal number of balls of radius $\varepsilon$ (in the norm of $Y$) necessary to cover the set $\Acal$. Then 
        \[
            \dim(\Acal) = \limsup_{\varepsilon \rightarrow 0} \frac{\log N_Y(\Acal, \varepsilon )}{-\log\varepsilon} = \limsup_{\varepsilon \rightarrow 0} -\log_{\varepsilon} N_Y(\Acal, \varepsilon)
        \]
        denotes the \emph{upper box counting dimension} of $\Acal$. 
        \item Let $Y$ be  a  Banach space,  and let $\Acal \subset Y$ be  compact.  For $\varepsilon > 0$,  denote  by $d_Y(\Acal, \varepsilon)$ the minimal dimension of all finite dimensional subspaces $V\subset Y$ such that every point of $\Acal$ lies within distance $\varepsilon$ of $V$. If no such $V$ exists, $d_Y(\Acal, \varepsilon) = \infty$. Then 
        \[
            \sigma(\Acal ,Y) = \limsup_{\varepsilon \rightarrow 0} - \log_{\varepsilon} d_Y(\Acal, \varepsilon )
        \]
        is called the \emph{thickness exponent} of $\Acal$ in $Y$.
    \end{enumerate}
\end{definition}

\begin{theorem}[\cite{Rob05,ZDG19}]\label{thm:embedding} 
Let $\Acal \subset \Ycal$ be a compact and invariant set with upper \emph{box counting dimension} $\dim(\Acal)=d$ and \emph{thickness exponent} $\sigma$. Choose an integer $q > 2(1 +\sigma )d$, and suppose further that the set $\Acal_p$ of $p$-periodic points of $\Phi^\tau$ satisfies $\dim(\Acal_p) < p/(2 + 2\sigma )$ for $p= 1,...,q$. 
Then for almost every (in the sense of \emph{prevalence}) Lipschitz map $f^d:\Ycal\rightarrow \R$ the \emph{delay observation map} $f:=D_q[f^d,\Phi^\tau]: \Ycal \rightarrow \R^q$ defined by 
\begin{equation*}
		y\mapsto\begin{bmatrix}
        f^d(y) & f^d(\Phi^\tau(y)) & \ldots & f^d(\Phi^{(q - 1)\tau}(y)
    \end{bmatrix}^\top
\end{equation*}
is one-to-one on $\Acal$. The same holds for a set of $q$ distinct observables $f^d_1,\ldots,f^d_q : \Ycal\to\R$, i.e.,
\begin{equation*}
    f = \begin{bmatrix}f^d_1(y) & \ldots & f^d_q(y)\end{bmatrix}^\top.
\end{equation*}
\end{theorem}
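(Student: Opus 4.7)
The plan is to follow the Sauer--Yorke--Casdagli prevalence argument, adapted to infinite-dimensional Banach spaces as in Robinson \cite{Rob05}. To show that the delay observation map is one-to-one on $\Acal$ for a \emph{prevalent} set of Lipschitz $f^d : \Ycal \to \R$, it suffices to exhibit a finite-dimensional probe space $E \subset \mathrm{Lip}(\Ycal, \R)$ such that, for every fixed $f^d$, the set of perturbations $e \in E$ for which $D_q[f^d + e, \Phi^\tau]$ fails injectivity has Lebesgue measure zero in $E$. A concrete probe space can be taken as the span of finitely many smooth ``coordinate'' functionals (for instance, truncated linear or polynomial functionals), with $\dim(E)$ slightly exceeding $q$.

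Next, I would characterise the bad set. Injectivity of $D_q[f^d + e, \Phi^\tau]$ fails precisely when there exist distinct $y, y' \in \Acal$ with
\begin{equation*}
    (f^d + e)(\Phi^{k\tau} y) = (f^d + e)(\Phi^{k\tau} y') \qquad \text{for } k = 0, 1, \ldots, q-1.
\end{equation*}
Splitting into the generic case, in which $y$ is \emph{not} $p$-periodic for any $p \leq q$ so that the $2q$ points $\{\Phi^{k\tau} y\} \cup \{\Phi^{k\tau} y'\}$ are pairwise distinct, and the periodic case, I would bound the codimension of the bad slice separately. In the generic case, the $q$ collision equations impose $q$ linearly independent constraints on $e \in E$, so for each fixed pair $(y, y')$ the bad slice has codimension $q$ in $E$. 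A Fubini/covering argument over $\Acal \times \Acal$ then bounds the total bad measure by a quantity whose essential exponent is $2d - q/(1+\sigma)$; the factor $(1+\sigma)$ arises because the infinite-dimensional set $\Acal \times \Acal$ must be approximated by finite-dimensional subspaces, and the thickness exponent $\sigma$ quantifies the best-approximation error. The hypothesis $q > 2(1+\sigma)d$ makes this exponent strictly negative, so the generic bad set is null.

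For the periodic case, if $y$ is $p$-periodic with $p \leq q$ then the $q$ delays collapse to only $p$ distinct values, so only $p$ independent constraints are available; the hypothesis $\dim(\Acal_p) < p/(2+2\sigma)$ is the exact analogue of the generic inequality and makes the corresponding bad set null as well. The extension to $q$ independent observables $f^d_1, \ldots, f^d_q$ follows by the same measure-theoretic reasoning on the product probe space $E^q$, since almost every tuple separates the relevant pairs. The main obstacle is the infinite-dimensionality of $\Ycal$: in the finite-dimensional Sauer--Yorke--Casdagli setting the constants depend only on $d$, whereas here one must quantify how well $\Acal$ can be approximated by finite-dimensional subspaces of $\Ycal$. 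This is precisely what the thickness exponent controls, and it is the reason the classical condition $q > 2d$ is inflated to $q > 2(1+\sigma)d$ and the periodic bound is inflated from $p/2$ to $p/(2+2\sigma)$.
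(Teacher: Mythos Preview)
The paper does not give its own proof of this theorem: it is stated with attribution to \cite{Rob05,ZDG19} and used as a black box (the subsequent Theorem~\ref{thm:Koopman_CDS} only invokes the \emph{conclusion} that $f$ is one-to-one on $\Acal$). So there is nothing in the paper to compare your argument against.

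That said, your sketch is a faithful outline of Robinson's prevalence argument in \cite{Rob05}: the choice of a finite-dimensional probe space of linear functionals, the reduction of non-injectivity to $q$ collision constraints, the covering/Fubini estimate with exponent essentially $2d - q/(1+\sigma)$, and the separate treatment of $p$-periodic orbits where only $p$ constraints survive. One point worth sharpening if you were to write this up in full: in Robinson's proof the probe space is taken to consist of \emph{bounded linear} functionals on $\Ycal$, and the key quantitative step is a H\"older-type lower bound on how well a generic linear functional separates points of $\Acal$ (this is where the thickness exponent enters, via Lemma~3.1 in \cite{Rob05}). Your phrase ``truncated linear or polynomial functionals'' is a bit loose---polynomials are not needed and would complicate the Lipschitz estimates. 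Otherwise the structure you describe matches the cited source.
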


\begin{remark}
The \emph{\textbf{central message}} of the above theorem is that we can draw a close connection between the observable $f$ 
in the Koopman setting (Definition \ref{def:Koopman} and Remark \ref{rem:vector}) and the observation map $f^d$ in the embedding framework. The key statement for our purposes is that if the system dynamics of $\Phi^\tau$ is restricted to a compact set $\Acal$ with a finite dimension $d$, then we need to have at least $q>2d$ distinct measurements $f^d$---which jointly form the Koopman observable $f$---to obtain a one-to-one correspondence between $\Phi^\tau$ and $\varphi^\tau$:
\begin{equation*}
\Phi^\tau = E \circ \varphi^\tau \circ D_q[f^d,\Phi^\tau] = E \circ \varphi^\tau \circ f,
\end{equation*}
where $E$ is the inverse of $f$. This way, the CDS becomes:
\begin{equation}\label{eq:CDS_PDE}
    \varphi^\tau = f\circ\Phi^\tau\circ f^{-1}.
\end{equation}
\end{remark}

As a consequence, we can relate the Koopman operator for $\varphi^\tau$ to the Koopman operator for $\Phi^\tau$.

\begin{theorem}\label{thm:Koopman_CDS}
Let the assumptions of Theorem \ref{thm:embedding} hold and define the Koopman operator $\Kcal^\tau$ for the PDE \eqref{eq:PDE} in its standard form as in \eqref{eq:Koopman}. Furthermore, define the Koopman operator for the CDS $\varphi^\tau$ with observable $h:\R^q \rightarrow \R^p$ as follows:
\[
    \Kcalhat^\tau h = h \circ \varphi^\tau, \qquad h\in\Hcal.
\]
Then $h\circ \Kcal^\tau f = \Kcalhat^\tau h \circ f$. 
Moreover, we find that $\Kcal^\tau$ and $\Kcalhat^\tau$ share the same spectrum, and the eigenfunctions are related via $f$. 
\end{theorem}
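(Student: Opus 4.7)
The first identity is essentially a direct substitution. By Theorem~\ref{thm:embedding}, the map $f$ is one-to-one on the compact set $\Acal$; since $f$ is continuous, $f:\Acal\to f(\Acal)$ is therefore a homeomorphism, and $f^{-1}\circ f = \identity_\Acal$. Using $(\Kcal^\tau f)(y)=f(\Phi^\tau(y))$ together with the CDS representation \eqref{eq:CDS_PDE}, both sides simplify to $h\circ f\circ\Phi^\tau$: the left-hand side immediately, and the right-hand side via
\begin{equation*}
    (\Kcalhat^\tau h\circ f)(y) = h(\varphi^\tau(f(y))) = h(f(\Phi^\tau(f^{-1}(f(y))))) = h(f(\Phi^\tau(y))).
\end{equation*}

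For the spectral claim, my plan is to exhibit $\Kcal^\tau$ and $\Kcalhat^\tau$ as similar operators via a bounded linear isomorphism. Concretely, I would introduce the pull-back map $C_f : C(f(\Acal))\to C(\Acal)$, $C_f(\hat g) = \hat g\circ f$. Because $f:\Acal\to f(\Acal)$ is a homeomorphism between compact sets, $C_f$ is a bijective linear isometry with respect to the supremum norm, with inverse $C_f^{-1}(g)=g\circ f^{-1}$. I would then check the intertwining relation directly: for any $\hat g\in C(f(\Acal))$,
\begin{equation*}
    \Kcal^\tau(C_f\hat g) = \hat g\circ f\circ\Phi^\tau = \hat g\circ \varphi^\tau\circ f = C_f(\Kcalhat^\tau \hat g),
\end{equation*}
where \eqref{eq:CDS_PDE} is used in the middle equality. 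Hence $\Kcal^\tau = C_f\,\Kcalhat^\tau\,C_f^{-1}$. Because $C_f$ is a topological isomorphism, similarity preserves the full spectrum (point, continuous, residual, approximate), and in particular $\phi\in C(\Acal)$ is an eigenfunction of $\Kcal^\tau$ for the eigenvalue $\lambda$ if and only if $\hat\phi = \phi\circ f^{-1}\in C(f(\Acal))$ is an eigenfunction of $\Kcalhat^\tau$ for the same $\lambda$, which is precisely the asserted correspondence via $f$.

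The main obstacle is not analytical but book-keeping: one must commit to the natural domains $\Acal$ and $f(\Acal)$ throughout. The inverse $f^{-1}$ is only defined on $f(\Acal)\subset\R^q$, not on all of $\R^q$, so one has to be careful that every composition with $f^{-1}$ arises in a context where it is immediately followed (or preceded) by $f$, or where the argument is already constrained to $f(\Acal)$. The embedding conclusion of Theorem~\ref{thm:embedding} together with the compactness of $\Acal$ removes this worry by giving a genuine homeomorphism, after which the rest of the argument is a routine conjugation statement for composition operators on spaces of continuous functions.
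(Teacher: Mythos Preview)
Your argument for the first identity is essentially the same as the paper's: both reduce to unwinding the definitions of $\Kcal^\tau$, $\Kcalhat^\tau$, and the CDS relation~\eqref{eq:CDS_PDE}.

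For the spectral claim your approach genuinely differs. The paper argues only at the level of eigenfunctions: starting from an eigenpair $(\hat\lambda,\hat\xi)$ of $\Kcalhat^\tau$, it substitutes $\varphi^\tau=f\circ\Phi^\tau\circ f^{-1}$ and reads off that $\xi:=\hat\xi\circ f$ satisfies $\xi\circ\Phi^\tau=\hat\lambda\,\xi$. You instead establish the operator similarity $\Kcal^\tau = C_f\,\Kcalhat^\tau\,C_f^{-1}$ via the composition isometry $C_f$, having first observed that $f:\Acal\to f(\Acal)$ is a homeomorphism (continuous bijection from a compact set). Your route buys more: it gives equality of the \emph{full} spectrum---not merely the point spectrum---which is what the theorem actually asserts, and it delivers the eigenfunction correspondence in both directions at once. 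The paper's computation is shorter and avoids any functional-analytic vocabulary, but taken literally it only shows one inclusion of point spectra; your conjugation argument closes that gap cleanly. Your remarks about restricting domains to $\Acal$ and $f(\Acal)$ are well taken and make explicit something the paper leaves implicit.
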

\begin{proof}
The proof immediately follows from the one-to-one correspondence established in Theorem \ref{thm:embedding}, which means that for every $z\in f(\Acal)$ we find exactly one $y\in\Acal$ such that $f(y)=z$. Choose an arbitrary $y_0\in\Acal$ with $y_1=\Phi^\tau(y_0)\in \Acal$ (we have $y_1\in\Acal$ due to the invariance of $\Acal$). Then
\begin{equation*}
    \begin{aligned}
        h((\Kcal^\tau f)(y_0)) &= h(f(\Phi^\tau(y_0))) = h(f(y_1)) = h(z_1) \\
        &= h(\varphi^\tau(z_0)) = (\Kcalhat^\tau h)(z_0) = (\Kcalhat^\tau h)(f(y_0)) \\
        &= (\Kcalhat^\tau h \circ f)(y_0).
    \end{aligned}
\end{equation*}
For the spectrum, consider an eigenfunction $\hat\xi$ with associated eigenvalue $\hat\lambda$ such that
\begin{align*}
    \Kcalhat^\tau \hat\xi = \hat\lambda \hat\xi.
\end{align*}
Then, using Eq.\ \eqref{eq:CDS_PDE} and introducing $\xi = \hat\xi\circ f$, we find
\begin{align*}
    \hat\xi \circ \varphi^\tau = \hat\xi \circ f\circ\Phi^\tau\circ f^{-1}&= \hat\lambda \hat\xi \\
    \Leftrightarrow \qquad \hat\xi \circ f\circ\Phi^\tau &= \hat\lambda \hat\xi \circ f \\
    \Leftrightarrow \qquad \quad~~\xi\circ\Phi^\tau &= \hat\lambda \xi.
\end{align*}
\end{proof}
\begin{remark}
    For $h=\identity$, we find $\Kcal^\tau f = \Kcalhat^\tau h \circ f$.
\end{remark}
\begin{remark}
    Note that Theorem \ref{thm:Koopman_CDS} does not address the question of the approximation properties of the operators $\Kcal^\tau$ and $\Kcalhat^\tau$. The existence of a one-to-one relationship is very helpful in determining the required number of observables---provided that the attractor dimension can be estimated---but it is hard to assess whether the embedded dynamics are simpler or harder to approximate from a purely numerical perspective. This warrants further investigation, which is out of the scope of this paper.
\end{remark}

\begin{remark}[Coarse graining of ODEs]
	Even though we have only considered PDEs here, similar challenges occur in large-scale systems of ODEs such as molecular dynamics, agent-based systems or dynamics on graphs (e.g., electric grids, where the entire graph is not necessarily known). There, a coarse graining to meaningful macro observables \cite{ZHS16,BKK+17,BNC18,NKS21,NKBC21} is highly desirable, at the cost of losing knowledge of the underlying dynamical equations.
\end{remark}

\section{Combining equivariance and partial measurements}
\label{sec:EqivPlusPartial}
We will now demonstrate the simultaneous application of both above-mentioned concepts, meaning that we will compute Koopman operators based on point measurements from a small, connected subset of $\Omega$, which preserve the equivariance properties of the underlying PDE.
We will compare local Koopman models $\Kcalhat^\tau$---obtained from $q$ point measurements located in a compact subset of $[0,L]$ (e.g., neighboring grid points in the discretization)---to a global Koopman model $\Kcal^\tau$ which is obtained using the classical full state observable (i.e., we observe the entire numerical grid at once). As briefly mentioned above, these point measurements can be interpreted in terms of Eq.\ \eqref{eq:observable} by considering a Dirac delta function as the kernel. This way, we obtain 
\[ \begin{bmatrix} z_{t,1} = f_{x_1}(y_t)=y_t(x_1) & \ldots & z_{t,N} = y_t(x_N) \end{bmatrix}. \]

\begin{figure}[b]
	\centering
    \includegraphics[width=.7\columnwidth]{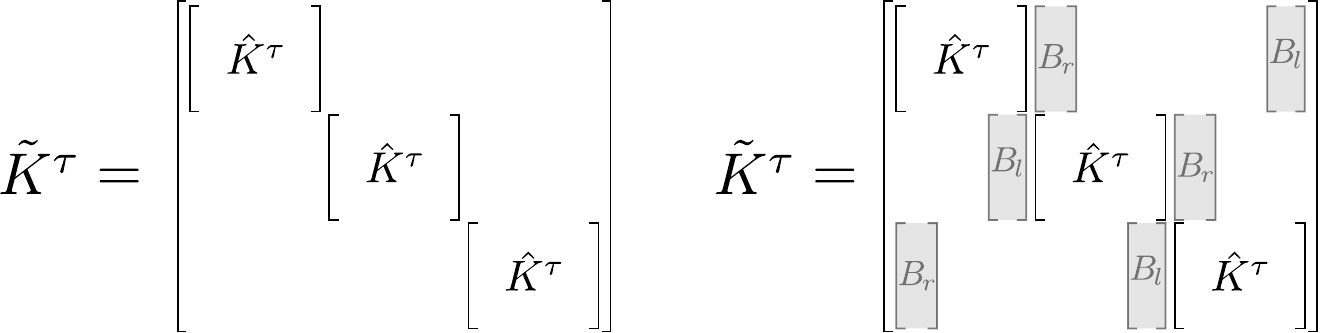}\\[.5em] \qquad(a)\hspace{4cm}(b)
	\caption{Schematic of the local Koopman approach. We consider a local Koopman matrix $\Khat^\tau \in \R^{q \times q}$. (a) The same approximation $\Khat^\tau$ can be applied anywhere in the domain such that we obtain a global matrix $\Ktilde^\tau$ with identical blocks $\Khat^\tau$. (b) The shaded $B$ terms represent coupling terms to neighboring local models if we pursue a DMDc-like approach.}
	\label{fig:Koopman_Conv}
\end{figure}

We will compare these two models both regarding the Koopman spectrum and the prediction accuracy. In the latter case, Fig.\ \ref{fig:Koopman_Conv} (a) illustrates how the local models $\Kcal^\tau$ can be combined to a global model of repeating entries. Conceptually, we simply obtain a number of entirely independent predictors for subsections of $\Ycal$. As we always have to deal with approximations of $\Kcalhat^\tau$, it is clear that the approximate solution can quickly lead to inconsistencies with respect to the PDE state, for instance by developing artificial discontinuities. The question is therefore whether we can establish a link between the different local models. A first intuitive approach would be to simply consider overlapping domains. For instance---considering the example of a local model of size three---we can apply the same $\Kcalhat^\tau$ to $\Psi((z_1,z_2,z_3))$ and to $\Psi((z_2,z_3,z_4))$.
However, this means that we effectively obtain multiple predictors for the same quantity (in the previous example, $z_2$ and $z_3$ are contained in both models). While it is certainly possible to project each of the local systems back onto the coordinates $z$ and then use the average as the predictor, we would still not achieve a coupling between different model instances.
\begin{remark}
    In fact, if we were able to obtain an exact finite-dimensional approximation of $\Kcalhat^\tau$, then we would quickly run into inconsistencies, as the same matrix would be a predictor for $(z_1,z_2,z_3)$ and for $(z_2,z_3,z_4)$. A quick calculation then shows that the prediction of any $z_i$ cannot depend on any other $z_j$, which would mean that $K^\tau$ is just a diagonal matrix. Our interpretation of this dilemma is that inconsistencies cannot be avoided, as in the generic situation, any finite-dimensional approximation is inexact, meaning that the local Koopman models have to be globally inconsistent or yield trivial dynamics.
    Note that this situation is different for other common observable functions such as Fourier modes. These have more sophisticated implications for the symmetry, as shown in Example \ref{ex:FourierObservable}
\end{remark}

Instead, a coupling can be achieved in two different ways. The first option is to treat the connection between two neighboring models as it is done in the Dynamic Mode Decomposition with control (DMDc, \cite{PBK15}), meaning that using a slightly modified regression problem, we obtain a model of the form
\begin{equation}\label{eq:DMDc}
    \begin{aligned}
    \Psi((z_{t+\tau,2},z_{t+\tau,3},z_{t+\tau,4})^\top) \approx \Khat^\tau \Psi((z_{t,2},z_{t,3},z_{t,4})^\top) &+ B_l z_{t,1} + B_r z_{t,5},
    \end{aligned}
\end{equation}
where the two terms $B_r, B_l\in\R^{q\times 1}$ are used to treat the left and right neighbors as control inputs to the dynamics.

As the above approach only works for control inputs that enter the original system in a purely linear fashion \cite{NPP+23}, an alternative approach is to accept that a purely linear approach may be too much to ask for. Instead, a coupling can be achieved by predicting the next state, project from $\Psi(z)$ onto the coordinates $z$, and then lift again for each system. Due to the project-then-lift step, we ``synchronize'' the local systems, at the cost of obtaining nonlinear dynamics. Similar observations in terms of trading the linearity for accuracy have been made in, e.g., \cite{CRLG23}. Note that if we use kernel EDMD with a radial basis function kernel, then this approach possesses a relation to Gaussian Process models.

\subsection{Numerical setup}
In the following, we will study the proposed approaches using the Kuramoto-Sivashinsky equation for two different parameter values $\mu>0$.
For the data generation process, we numerically solve \eqref{eq:KS} using the spectral Galerkin method implemented in the open source package \emph{shenfun} \cite{Mor18}. As a spatial discretization of $\Omega$, we use $N=32$ Fourier modes, which is equivalent to $N=32$ equidistant grid points, i.e., we have $\Delta x = \frac{L}{N} = \frac{\pi}{16}$. The time step for the PDE solver is $\Delta t=0.01$, and we set $\tau = 20\Delta t = 0.2$ for the traveling wave and $\tau = 5 \Delta t = 0.05$ for the bimodal fixed-point setting. We collect $M=1000$ samples from the attractor $\Acal$, which yields a sufficient coverage for the considered $\mu$ values.

In our experiments, we first compare the PDE solution to the approximation $K$ of the global Koopman operator $\Kcal$, where $f(y)=y$. We then compare this to local Koopman models $\Khat$ both in terms of the Koopman spectrum as well as regarding the prediction accuracy. 
For the latter, we construct a global model $\Ktilde$ from the local $\Khat$. We do so following both the classical approach (Fig.\ \ref{fig:Koopman_Conv} (a)) and the DMDc approach (Fig.\ \ref{fig:Koopman_Conv} (b)).
Following Theorem \ref{thm:embedding}, we study different embedding dimensions, where $q_w$ is the \emph{window width} (i.e., the number of neighboring points of the discretized PDE), $q_d$ is the number of delays, and the total dimension is $q=q_w\cdot q_d$. The global Koopman model $K$ is thus a special case of $\Khat$ where $q_w=N$. We will use standard DMD (i.e., $\Psi=\identity$) in all experiments.

\subsection{Traveling wave ($\mu=15$)}
At $\mu=15$, the system exhibits a traveling wave solution, as shown in Fig.\ \ref{fig:KS_mu15_PDE_vs_K}. As this is a very simple behavior, we do not study delay coordinates and set $q_d=1$, i.e., $q=q_w$. We see in Fig.\ \ref{fig:KS_mu15_PDE_vs_K} (bottom) that the DMD approximation (i.e., $\Psi=\identity$) is sufficient to yield accurate long-term predictions, even though a slow decay is visible after a while.
\begin{figure}[h]
    \centering
    \includegraphics[width=.7\columnwidth]{
        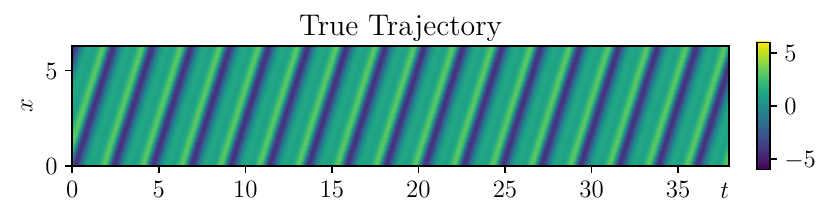}
    \includegraphics[width=.7\columnwidth]{
        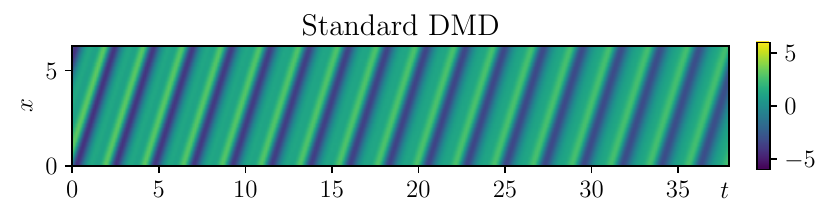}
    \caption{PDE solution vs.\ global Koopman-approximation for $\mu=15$. To compute $K$, we have used the standard DMD algorithm on the full state observable (i.e., $f=\Psi=\identity$). 
    }
    \label{fig:KS_mu15_PDE_vs_K}
\end{figure} 
\begin{figure}[h]
    \centering
    \includegraphics[width=0.4\columnwidth]{
        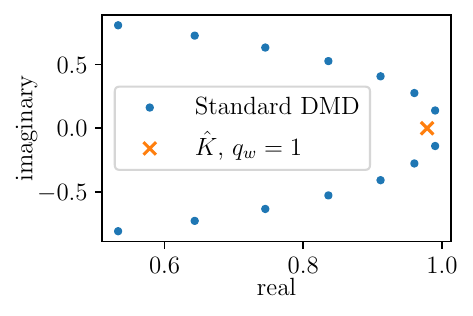}\hfil
    \includegraphics[width=0.4\columnwidth]{
        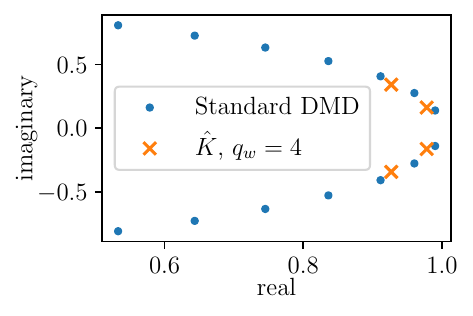}
    \includegraphics[width=0.4\columnwidth]{
        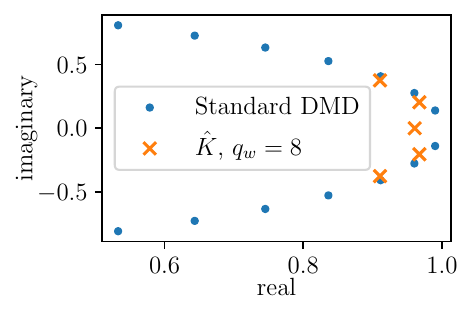} \hfil
    \includegraphics[width=0.4\columnwidth]{
        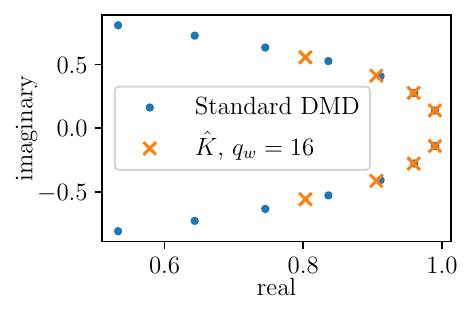}
    \caption{Eigenvalues of $K$ vs.\ $\Khat$ for varying $q$ values.}
    \label{fig:KS_mu15_spectra}
\end{figure}

We next compare $K$ and the local Koopman model $\Khat$ for different values of $q_w=q$, varying between $q=1$ and $q=16$, which is half of the domain. The corresponding spectra are compared in Fig.\ \ref{fig:KS_mu15_spectra}, and we observe a very good agreement for the leading eigenvalues (the lowest frequency corresponds to the frequency of the traveling wave).

The prediction of the state $y$ using the reconstructed Koopman operator $\Ktilde$ (according to Fig.\ \ref{fig:Koopman_Conv} (a)) is depicted in Fig.\ \ref{fig:KS_mu15_predictions}. As discussed before, the complete decoupling of the local models $\Khat$ yields globally inconsistent dynamics caused by small prediction errors. This is most evident for $q=4$ and $q=8$. At $q=16$, the approximation is sufficiently accurate that we do no longer observe this phenomenon. Interestingly, this decoherence is quite severe even though the one-step prediction error reaches a very small value already at $q=4$, cf.\ Fig.\ \ref{fig:KS_mu15_errors}.

\begin{figure}[h]
    \centering
    \includegraphics[width=.49\columnwidth]{
        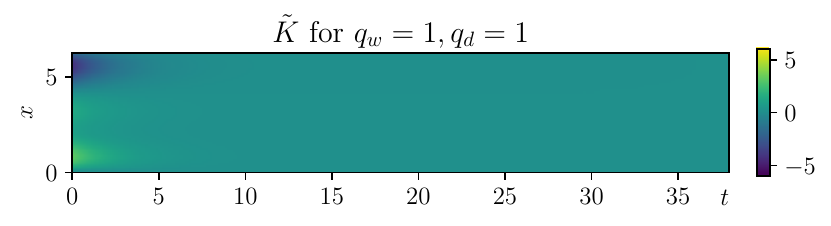}
    \includegraphics[width=.49\columnwidth]{
        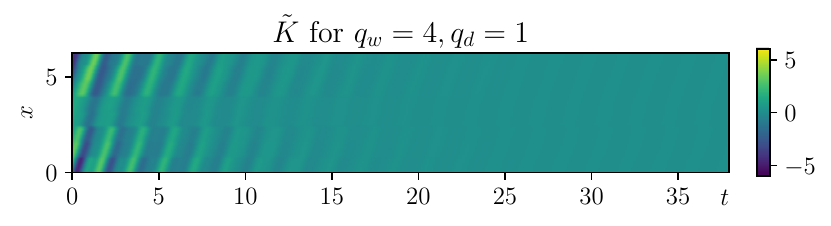}
    \includegraphics[width=.49\columnwidth]{
        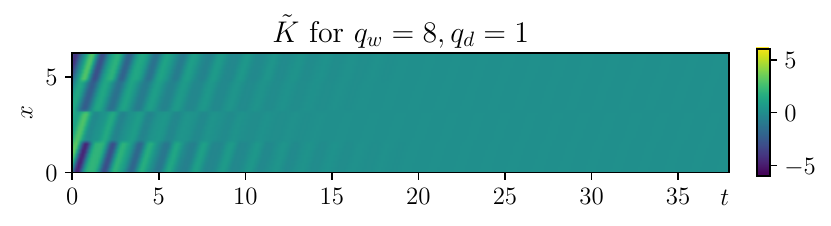}
    \includegraphics[width=.49\columnwidth]{
        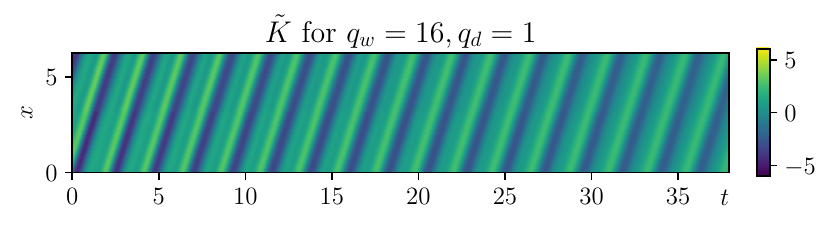}
    \caption{Predictions using $\Ktilde$ with varying $q$ values.}
    \label{fig:KS_mu15_predictions}
\end{figure}

\begin{figure}[h]
    \centering
    \includegraphics[width=0.4\textwidth]{
        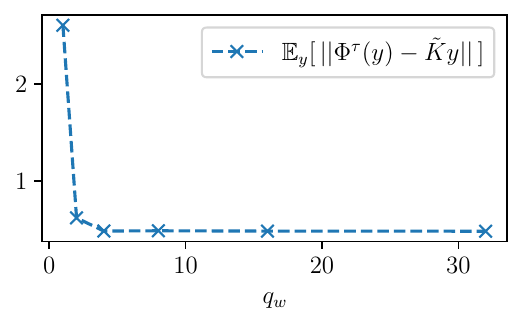}
    \caption{One-step prediction error for varying window widths $q_w$.}
    \label{fig:KS_mu15_errors}
\end{figure}

\begin{figure}[h]
    \centering
    \includegraphics[width=.6\columnwidth]{
        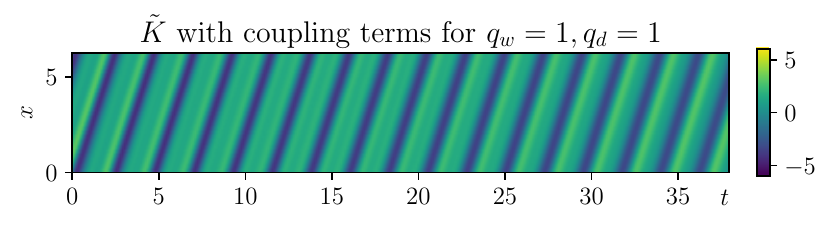}
    \caption{Local DMDc-approximation according to Fig.\ \ref{fig:Koopman_Conv} (b), with $q=1$ and an additional control input from left and right, respectively.}
    \label{fig:KS_mu15_DMDc}
\end{figure}

Unsurprisingly, the prediction accuracy improves massively when we build in a coupling term according to Fig.\ \ref{fig:Koopman_Conv} (b) and Eq.\ \eqref{eq:DMDc}. Using a single input from left and right, we obtain high-quality predictions using a very small linear system ($q=1$ and two inputs), see Fig.\ \ref{fig:KS_mu15_DMDc}.

\subsection{Bimodal fixed point ($\mu=18$)}
As a second system, we study the parameter $\mu=18$ which results in a ``checkerboard'' pattern. As the dynamics are now much more complex (due to the equivariance w.r.t.\ translations in space, the attractor consists of infinitely many checkerboard patterns), we here additionally consider $q_d=50$ delay observations. Using this higher-dimensional observable, many results from the previous case hold in a very similar fashion. Fig.\ \ref{fig:KS_mu18} shows a comparison of different approximations, more figures can be found in the appendix.
\begin{figure}[h]
    \centering
    \includegraphics[width=.6\columnwidth]{
        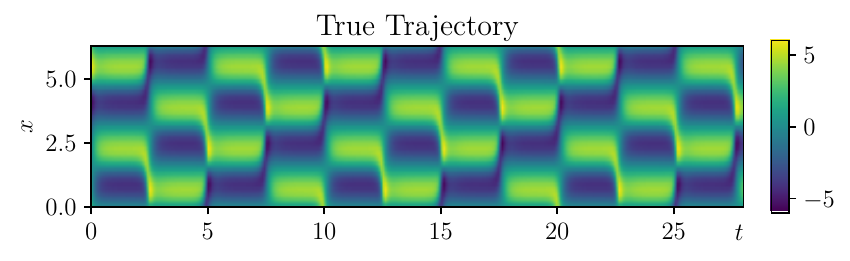}
    \includegraphics[width=.6\columnwidth]{
        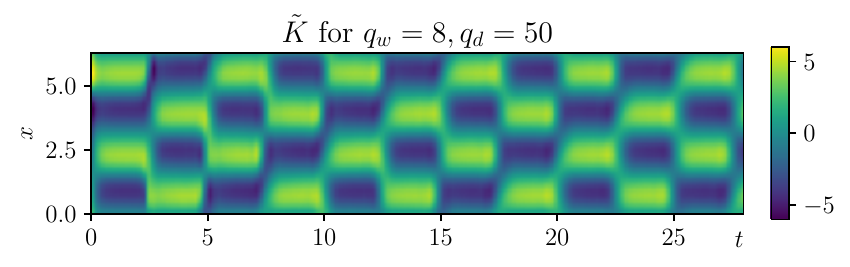}
    \includegraphics[width=.6\columnwidth]{
        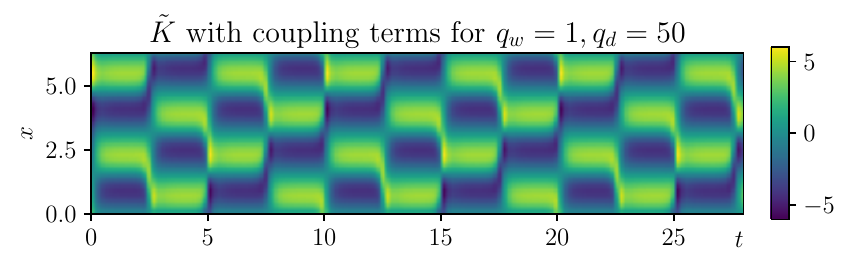}
    \caption{PDE vs.\ local model with $q_w=8$ and $q_d=50$ versus DMDc model with $q_w=1$ and $q_d=50$.}
    \label{fig:KS_mu18}
\end{figure}

\section{Conclusion}
We have presented two extensions to the current Koopman theory that deal with (i) the issue of not knowing the system's state space (i.e., using partial measurements, for instance from sensors) and (ii) the exploitation of symmetries when setting up Koopman-based surrogate models.
Regarding part (i), we have shown that there exists a close connection between the Koopman observable function $f$ and observation maps as they are defined in the embedding literature (Takens, Whitney, ...). If we observe sufficiently many points (more than two times the attractor dimension), then we can simply treat our measurements as if they have been generated by another dynamical system with state space $\R^q$, on which we can then use standard EDMD techniques. This yields rigorous criteria for the situation when building Koopman models exclusively from partial measurements.
For part (ii), we have then exploited this in order to simply use $q$ local measurements to build a local Koopman operator approximation.
Our numerical results show that including coupling terms (in the spirit of DMD with control) significantly increases the accuracy, even for very small model sizes.
The extension towards other symmetry groups as well as to two or three-dimensional spatial domains us currently under investigation, with a strong focus on equivariant convolution operators.

\section{Acknowledgments}

S.P.\ and H.H.\ acknowledge financial support by the project ``SAIL: SustAInable Life-cycle of Intelligent Socio-Technical Systems'' (Grant ID NW21-059D), which is funded by the program ``Netzwerke 2021'' of the Ministry of Culture and Science of the State of Northrhine Westphalia, Germany. K.W.\ gratefully acknowledges funding by the Deutsche Forschungsgemeinschaft (DFG, German Research Foundation) under the Project-ID 507037103.

\bibliographystyle{alpha}
\bibliography{references}

\appendix
\section{Appendix}
\subsection{Additional plots for $\mu=18$}
We here show the same figures as for the traveling wave solution at $\mu=15$. The key difference in the numerical approximation is that we now consider delay observables, i.e., $f$ consists of $q_d=50$ delays and varying numbers $q_w$ of observed grid nodes.
\begin{figure}[h]
    \centering
    \includegraphics[width=.6\columnwidth]{
        graphics/mu18_bwd20_fwd20_delays50/true.pdf}
    \includegraphics[width=.6\columnwidth]{
        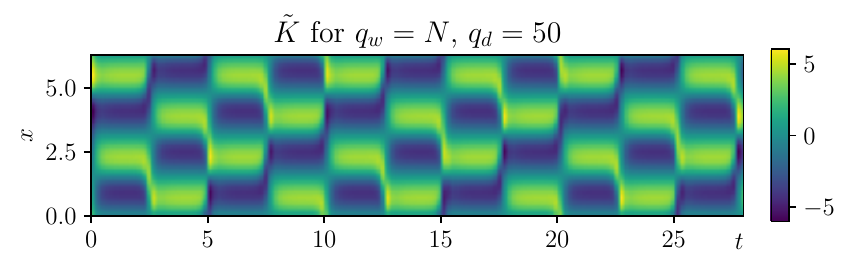}
    \caption{Numerical PDE solution versus global Koopman-approximation for $\mu=18$.}
    \label{fig:KS_mu18_PDE_vs_K}
\end{figure} 
\begin{figure}[h]
    \centering
    \includegraphics[width=0.4\columnwidth]{
        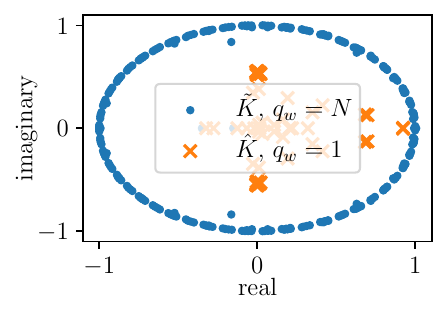}\hfil
    \includegraphics[width=0.4\columnwidth]{
        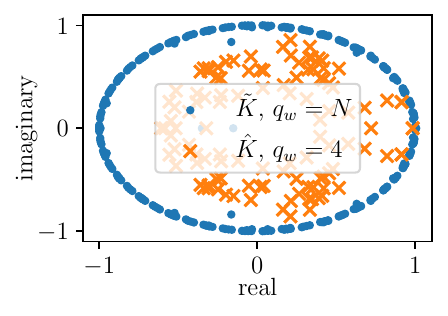}
    \includegraphics[width=0.4\columnwidth]{
        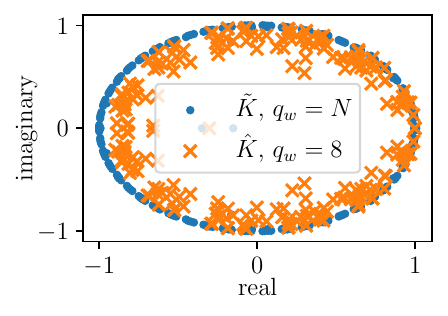}\hfil
    \includegraphics[width=0.4\columnwidth]{
        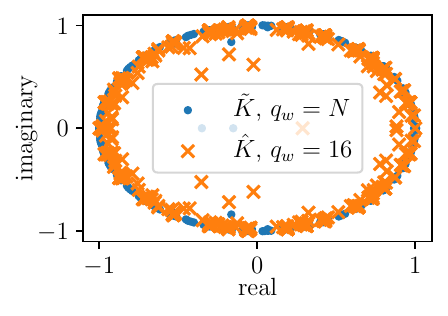}
    \caption{Eigenvalues of $K$ vs.\ $\Khat$ for varying $q$ values.}
    \label{fig:KS_mu18_spectra}
\end{figure}
\begin{figure}[h]
    \centering
    \includegraphics[width=.49\columnwidth]{
        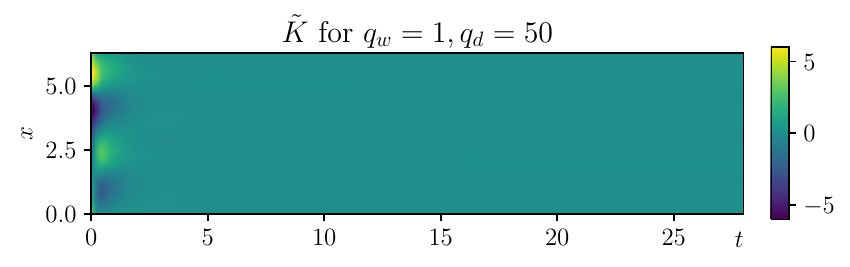}
    \includegraphics[width=.49\columnwidth]{
        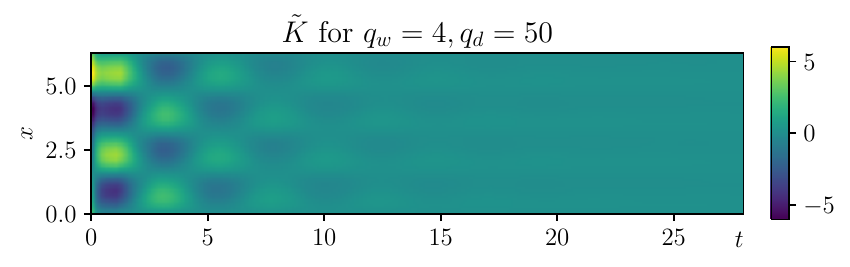}
    \includegraphics[width=.49\columnwidth]{
        graphics/mu18_bwd20_fwd20_delays50/K_hat_q8.pdf}
    \includegraphics[width=.49\columnwidth]{
        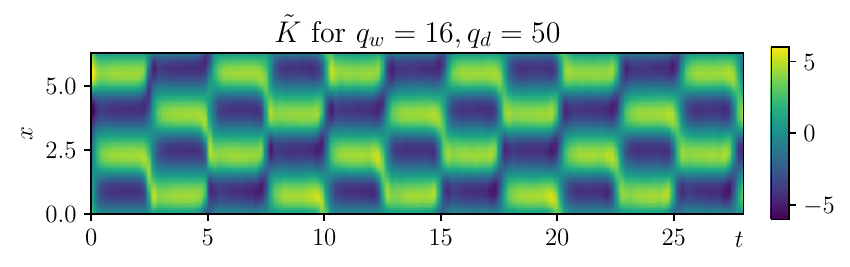}
    \caption{Predictions using $\Ktilde$ with varying $q$ values.}
    \label{fig:KS_mu18_predictions}
\end{figure}
\begin{figure}[h]
    \centering
    \includegraphics[width=0.5\textwidth]{
        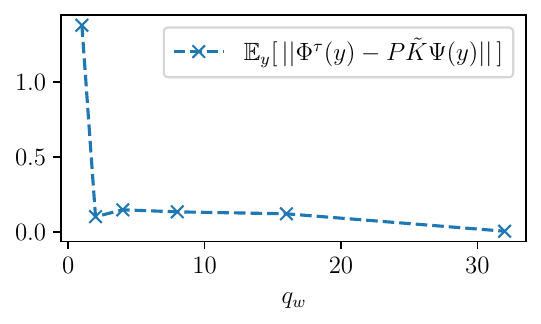}
    \caption{One-step prediction error of $\Ktilde$-predictions for varying $q$ values.}
    \label{fig:KS_mu18_errors}
\end{figure}
\begin{figure}[h]
    \centering
    \includegraphics[width=.6\columnwidth]{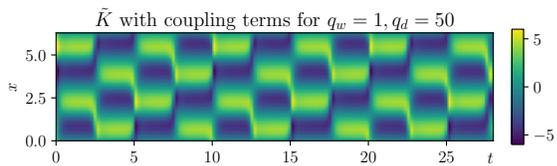}
    \caption{Local DMDc-approximation according to Fig.\ \ref{fig:Koopman_Conv} (b), with $q_d=50$, $q_w=1$ and an additional control input from left and right (all $q_d=50$ delays), respectively.}
    \label{fig:KS_mu18_DMDc}
\end{figure}

\end{document}